\newcommand{\R}{{\mathbb{R}}}          
\newcommand{\lrr}{\longrightarrow}
\newcommand{\calR}{{{\cal R}^\xi}}             %
\newcommand{\na}{{\nabla}}
\newcommand{\nag}{{\nabla^g}}
\newcommand{\End}[1]{{\mathrm{End}}\,{#1}}
\newcommand{\ric}{{\mathrm{ric}}}
\newcommand{\grad}{{\mathrm{grad}}\,}
\newcommand{\dx}{{\mathrm{d}}}
\newcommand{\dxna}{{\mathrm{d}^\na}}
\newcommand{\cinf}[1]{{\mathrm{C}}^\infty_{#1}}
\newcommand{\expo}{{\mathrm{e}}}
\newtheorem{teo}{Theorem}[section]
\newtheorem{coro}{Corollary}[section]
\newtheorem{prop}{Proposition}[section]
\newenvironment{meuenumerate}
{\begin{enumerate}
  \setlength{\itemsep}{2.5pt}
  \setlength{\parskip}{-1pt}
  \setlength{\parsep}{-1pt}}
{\end{enumerate}}
\def\cyclic{\mathop{\kern0.9ex{{+}
\kern-2.2ex\raise-.28ex\hbox{\Large\hbox{$\circlearrowright$}}}}\limits}
\title{Curvatures of weighted metrics on tangent sphere bundles}
\author{R. Albuquerque\footnote{{\texttt{rpa@uevora.pt}}\ ,\ \ \ Departamento de Matem\'atica da Universidade de \'Evora and Centro de Investiga\c c\~ao em Matem\'atica e Aplica\c c\~oes (CIMA), Rua Rom\~ao Ramalho, 59, 671-7000 \'Evora, Portugal.}}
\begin{document}


\maketitle


\markright{\sl\hfill  R. Albuquerque \hfill}

\begin{abstract}

We determine the curvature equations of natural metrics on tangent
bundles and radius $r$ tangent sphere bundles $S_rM$ of a Riemannian
manifold $M$. A family of positive scalar curvature metrics on $S_rM$ is found,
for any $M$ with bounded sectional curvature and any chosen constant $r$.

\end{abstract}

\vspace*{4mm}

{\bf Key Words:} metric connection, tangent sphere bundle, curvature.

\vspace*{2mm}

{\bf MSC 2010:} 53A30, 53C17, 53C21

\vspace*{10mm}

The author acknowledges the support of Funda\c{c}\~{a}o Ci\^{e}ncia e Tecnologia, Portugal, through Centro de Investiga\c c\~ao em Matem\'atica e Aplica\c c\~oes da Universidade de \'Evora (CIMA-UE) and the sabbatical grant SFRH/BSAB/895/2009.

\vspace*{12mm}

\section{Introduction}

This article continues the study of some structures which identify the tangent
sphere bundles $S_rM=\{u\in TM:\ \|u\|=r\}$ of a Riemannian manifold $(M,g)$
with variable radius and weighted Sasaki metric. We use the same notation from
\cite{Alb3}.

Throughout, we assume that $M$ is an $m$-dimensional manifold with a Riemannian
metric $g$ and a compatible metric connection $\na$ on $M$. The latter induces a
splitting of $TTM=H\oplus V$ with both $H,V$ parallel and isometric to
$\pi^*TM$. We have a map $\theta\in\End{TTM}$, which identifies $H$ with $V$,
sends $V$ to 0 and is parallel for $\na^*=\pi^*\na\oplus\pi^*\na$. The manifold
$TM$ is endowed with a canonical vertical vector field $\xi$, defined by
$\xi_u=u$. It is known as the spray of the connection since $\pi^*\na_X\xi=X^v$
and this projection has kernel $H$. 

We continue our study assuming metrics of the kind $g^{f_1,f_2}=f_1\pi^*g\oplus
f_2\pi^*g$ on $H\oplus V$, where $f_1,f_2$ are given by
\begin{equation}
 f_1=\expo^{2\varphi_1},\qquad f_2=\expo^{2\varphi_2},
\end{equation}
for some functions $\varphi_1,\varphi_2$ on $M$. Obviously we let these functions be composed with $\pi$ when considered on the manifold $TM$. Recall the well known Sasaki metric is just $g^S=g^{1,1}=\langle\cdot,\cdot \rangle$ with $H$ induced by the Levi-Civita connection. We remark the
addition of a third component $f_3\mu\otimes\mu$, where
$\mu=(\theta^t\xi)^\flat$, gives a metric with interesting properties on
$S_rM$, rather than the more studied Cheeger-Gromov metric.

We treat all vectors equally and use canonical projections $X=X^v+X^h$ when
necessary, since we do not recur to lifts of tangent vectors on $M$ to either
sections of $H$ or $V$. We wish to concentrate on tensors defined on $TM$.
Notice the holonomy Lie algebra of any of the metrics above remains unknown
in general, even if $M$ is any irreducible Riemannian symmetric space. Our main
objective here is to envisage a solution to that problem and so we compute several curvature formulas.

The geometry of tangent bundles has had much attention in the past and the
Riemannian curvature of the Sasaki metric has been found (cf. the references
in \cite{Alb3,Blair,Munteanu}). Regarding the radius $r$ tangent sphere bundle with the induced metric from $g^{f_1,f_2}$ we achieve in Theorem \ref{enfimcurvaturaescalarpositiva} a generalisation of a result from \cite{KowSek2}: if $M$ has $\dim\geq3$ and bounded sectional curvature, and
$f_1$ is sufficiently large or $f_2$ is sufficiently small, with both constant,
then $S_rM$ has positive scalar curvature. 

Our purpose with this study is also towards the geometry of the so called gwistor
bundle, which is the natural $G_2$-structure existing on $S_1M$ for any oriented
Riemannian 4-manifold.

Parts of this article were written during a sabbatical leave of the author
at the Philipps-Universit\"at Mathematics Department, Marburg. He
wishes to thank their great hospitality and the excellent time spent there.

\subsection{Computing the curvature of $TM$}
\label{CurvatureofTM}

Let $\na=\nag$ denote the Levi-Civita connection of $M$. As one of the few
cases one can cope with, we study the curvature of $G=g^{f_1,f_2}$
where $f_2=\expo^{2\varphi_2}$ is a function on $M$ and $f_1$ is a constant. We
define $\delta=\frac{f_2}{f_1}$. 

Recall from \cite[Theorem 5.2]{Alb3} that the Levi-Civita connection of the tangent bundle is given by determining first
\begin{equation}
  \na^{*,f_2,'}_XY^v=\na^*_XY^v+X(\varphi_2)Y^v,
\end{equation}
\begin{equation}
  D^*=\na^*\oplus\na^{*,f_2,'} \hspace{1cm}\mbox{on}\ \ H\oplus V=TTM,
\end{equation}
\begin{equation}
B(X,Y)=Y(\varphi_2)X^v-\delta\langle X^v,Y^v\rangle\grad\varphi_2,
\end{equation}
\begin{equation}
 \langle A_XY,Z\rangle=\frac{\delta}{2}(\langle\calR(X,Z),Y\rangle 
+\langle\calR(Y,Z),X\rangle).
\end{equation}
The first connection is metric on the vector bundle $V$. The tensor $\calR$ is
given by $\calR(X,Y)=\pi^*R^\na(X,Y)\xi$ and finally $\forall X,Y\in\Gamma(TM,H\oplus V)$, we have
\begin{equation}
 \na^G_XY=D^*_XY-\frac{1}{2}\calR(X,Y)+A(X,Y)+B(X,Y).
\end{equation}

We recall, for a moment, that if $\na'=\na+C$ and $\na$ are two connections on a
vector bundle $L$, hence with $C\in\Omega^1(\End{L})$, then
\begin{equation}
 R^{\na'}=R^\na+\dx^\na C+C\wedge C
\end{equation}
where
\begin{equation}
  \dx^\na C(X,Y)=\na_XC_Y-\na_YC_X-C_{[X,Y]}
\end{equation}
and
\begin{equation}
 (C\wedge C)(X,Y)Z=C(X,C(Y,Z))-C(Y,C(X,Z))
\end{equation}
with $X,Y$ vector fields and $Z$ a section of $L$.

Now, we have to compute several $\dx^{\na}$ derivatives of our structure, where
$\na=\na^*\oplus\na^*$ respecting the splitting $H\oplus V$. Recall the formula
already implicitly used, $R^{\na^*}=\pi^*R^\na$, for this is a tensor. Assuming
the reader is by now familiar with the notation, we shall let fall the asterisk
wherever possible and abbreviate $R^\na=R$. 

Let $A^{\na_X\calR}$ be defined (in the same way as the tensor $A$ is defined):
\begin{equation}
\langle A^{\na_X\calR}(Y,Z),W\rangle=\frac{\delta}{2}(\langle(\na_X\calR)(Y,W),Z\rangle 
+\langle(\na_X\calR)(Z,W),Y\rangle).
\end{equation}
Again we have the properties 
\begin{equation}
 \na_X\calR(Y,Z)=\na_X\calR(Y^h,Z^h)\ \in V, 
\end{equation}
\begin{equation}
 A^{\na_X\calR}(X,Y)=A^{\na_X\calR}(X^h,Y^v)+A^{\na_X\calR}(X^v,Y^h)\ \in H.
\end{equation}
\begin{prop}\label{partesdacurvatura}
We have:
 \begin{meuenumerate}
  \item $R^{\na^{*,f_2,'}}=R$.
\item $(\na_{X}\calR)(Y,Z)=(\na_{X^h}R)(Y,Z)+R(Y,Z)X^v$.
\item $\dxna\calR(X,Y)Z=(\na_XR)(Y,Z)\xi-(\na_YR)(X,Z)\xi+R(Y,Z)X^v-R(X,Z)Y^v$.
\item $\dxna A(X,Y)Z=(\dx\varphi_2\wedge A)(X,Y)Z-A^{\na_Y\calR}_XZ+A^{\na_X\calR}_YZ+A(\calR(X,Y),Z)$.
 \end{meuenumerate}
\end{prop}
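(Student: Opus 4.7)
Part (1) follows from formula (7) applied to $\na^{*,f_2,'} = \na^* + C$, where $C = (\pi^*\dx\varphi_2)\otimes\Id$ is a scalar 1-form with values in $\End V$. Pointwise $C$ is a scalar multiple of the identity, so $C\wedge C = 0$, and
\begin{equation*}
\dxna C = (\pi^*\dx^2\varphi_2)\otimes\Id + \pi^*\dx\varphi_2\wedge(\na\Id) = 0
\end{equation*}
because $\dx^2=0$ and the identity endomorphism is parallel. Hence $R^{\na^{*,f_2,'}} = R^{\na^*} = R$.

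For part (2), the Leibniz rule applied to $\calR(Y,Z)=\pi^*R(Y,Z)\xi$ gives
\begin{equation*}
(\na_X\calR)(Y,Z) = (\na^*_X \pi^*R)(Y,Z)\,\xi + \pi^*R(Y,Z)\,\na^*_X\xi.
\end{equation*}
Since $\na = \na^*\oplus\na^*$ respects the $H\oplus V$ splitting and $\na^* = \pi^*\na$ acts on pullback tensors by $\na^*_X = \pi^*\na_{d\pi X}$, the first term is $(\na_{X^h}R)(Y,Z)\,\xi$, and the spray identity $\na^*_X\xi=X^v$ handles the second.

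Parts (3) and (4) share a common scheme. Viewing $\calR$ and $A$ as 1-forms with values in $\End(TTM)$ via $\calR_X(Z):=\calR(X,Z)$ (using $V\subset TTM$) and $A_X(Z):=A(X,Z)$, formula (8) rewrites as
\begin{equation*}
\dxna S(X,Y)Z = (\na_X S)(Y,Z) - (\na_Y S)(X,Z) + S(\mathcal T(X,Y), Z),\qquad S\in\{\calR,A\},
\end{equation*}
where $\mathcal T$ denotes the torsion of $\na^*\oplus\na^*$ on $TTM$. A direct bracket computation with horizontal lifts, combined with torsion-freeness of the Levi-Civita connection on $M$, shows that $\mathcal T(X,Y)$ is vertical and equals $\calR(X^h,Y^h)$ up to sign. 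For part (3) the correction $\calR(\mathcal T(X,Y),Z)$ vanishes because $\calR$ annihilates vertical inputs, and antisymmetrizing part (2) yields the claim. For part (4), $A$ does not annihilate vertical inputs --- indeed $\langle A(U,Z),W\rangle = \frac{\delta}{2}\langle\calR(Z,W),U\rangle$ for any vertical $U$ --- so the correction persists as $A(\calR(X,Y),Z)$.

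It remains to identify $(\na_X A)(Y,Z)$ for part (4). Differentiating the defining identity $\langle A(Y,Z),W\rangle = \frac{\delta}{2}(\langle\calR(Y,W),Z\rangle + \langle\calR(Z,W),Y\rangle)$ via metric compatibility of $\na^*$ and using the Leibniz rule on $\calR$, the terms involving $\na_X Y$, $\na_X Z$ and $\na_X W$ cancel pairwise, leaving
\begin{equation*}
(\na_X A)(Y,Z) = \frac{X(\delta)}{\delta}\,A(Y,Z) + A^{\na_X\calR}(Y,Z).
\end{equation*}
With $\varphi_1$ constant, $X(\delta)/\delta$ is a multiple of $X(\varphi_2)$, and antisymmetrizing in $X,Y$ produces the $\dx\varphi_2\wedge A$ contribution. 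The whole delicacy of the proof lies in the correct accounting of the torsion $\mathcal T$ of $\na^*\oplus\na^*$: being essentially $\calR$, it acts trivially on $\calR$ itself and nontrivially on $A$, which is precisely what distinguishes parts (3) and (4).
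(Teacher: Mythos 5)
Your proof follows the paper's own argument essentially step for step: part (1) via the curvature formula for the connection $\na^*+\dx\varphi_2\cdot\Id$, part (2) by the Leibniz rule together with $\na^*_X\xi=X^v$, and parts (3)--(4) by writing $\dxna S(X,Y)=(\na_XS)(Y)-(\na_YS)(X)+S(T(X,Y))$ with the torsion $T$ of $\na^*\oplus\na^*$ equal to $+\calR$ (so your ``up to sign'' hedge can be dropped, since the sign matters for the surviving term $A(\calR(X,Y),Z)$ in part (4)), the correction term dying on $\calR$ because $\calR$ kills vertical inputs and surviving on $A$ because it does not. The only divergence is in bookkeeping: your coefficient $X(\delta)/\delta=2X(\varphi_2)$ in $(\na_XA)(Y,Z)$ is in fact the honest one (the paper's displayed computation loses a factor of $2$ when differentiating $\delta/2=\expo^{2\varphi_2}/2f_1$), so strictly the antisymmetrization yields $2\,\dx\varphi_2\wedge A$; this normalization is immaterial in every subsequent application, where either $A=0$ or $f_2$ is constant.
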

\begin{proof}
 1. The connection is $\na_XY+\dx\varphi_2(X)Y$. Thence $\dxna(\dx\varphi_2.1)=\dx\dx\varphi_2.1=0$. And clearly
\[  \dx\varphi_2.1\wedge\dx\varphi_2.1=\dx\varphi_2\wedge\dx\varphi_2.1=0.         \]
2. For any vector fields:
\begin{eqnarray*}
 \na_X\calR\,(Y,Z)&=& \na^*_X(\pi^*R(Y,Z)\xi)-\pi^*R(\na^*_XY,Z)\xi-\pi^*R(Y,\na^*_XZ)\xi\\
  &=& \pi^*(\na_{\dx\pi X}R)(Y,Z)\xi+R(Y,Z)\na_X\xi\\
  &=& (\na_{X^h}R)(Y,Z)\xi+R(Y,Z)X^v
\end{eqnarray*}
since we have the identity $\na_X\xi=X^v$.\\
3. Since $\calR_X=\calR_{X^h}$ and $\pi^*T^\na=0$, we have 
\begin{eqnarray*}
{\lefteqn{\dxna\calR(X,Y)Z \ =}}\\
 &=& (\na_X\calR_Y-\na_Y\calR_X-\calR_{[X,Y]})Z\\
 &=& \na_X(R(Y,Z)\xi)-R(Y,\na_XZ)\xi-\na_Y(R(X,Z)\xi)+R(X,\na_YZ)\xi\\
 & &\hspace*{3cm}-R(\na_XY,Z)\xi+R(\na_YX,Z)\xi\\
&=& (\na_XR)(Y,Z)\xi-(\na_YR)(X,Z)\xi+R(Y,Z)\na_X\xi-R(X,Z)\na_Y\xi\\
&=& \na_X\calR\,(Y,Z)-\na_Y\calR\,(X,Z).
\end{eqnarray*}
4. First we find
\begin{eqnarray*}
\lefteqn{\langle\na_X(A(Y,Z)),W\rangle\ =}\\
 &=& X(\langle A(Y,Z),W\rangle)-\langle A(Y,Z),\na_XW\rangle\\
&=& \frac{1}{2f_1}(X(f_2))(\langle \calR(Y,W),Z\rangle+\langle\calR(Z,W),Y\rangle)\\
& &+\frac{f_2}{2f_1}\bigl(\langle\na_X(\calR(Y,W)),Z\rangle+\langle\calR(Y,W),\na_XZ\rangle+\langle\na_X(\calR(Z,W)),Y\rangle+\\
& &+\langle\calR(Z,W),\na_XY\rangle-\langle\calR(Y,\na_XW),Z\rangle-\langle\calR(Z,\na_XW),Y\rangle\bigr)\\
&=& \langle X(\varphi_2)A(Y,Z),W\rangle+\frac{f_2}{2f_1}\bigl(\langle(\na_X\calR)(Y,W)+\calR(\na_X Y,W),Z\rangle+\\
& & \langle\calR(Y,W),\na_XZ\rangle+\langle(\na_X\calR)(Z,W)
+\calR(\na_X Z,W),Y\rangle+\langle\calR(Z,W),\na_XY\rangle\bigr)\\
&=& \langle X(\varphi_2)A(Y,Z)+A^{\na_X\calR}(Y,Z)+A(\na_XY,Z)+A(Y,\na_XZ),W\rangle.
\end{eqnarray*}
Recalling the torsion of $\na^*$ is $\calR$, cf. \cite[Proposition
5.1]{Alb3}, we then have
\begin{eqnarray*}
\lefteqn{\dxna A(X,Y)Z\ =}\\
&=& (\na_XA_Y)Z-(\na_YA_X)Z-A_{[X,Y]}Z\\
&=&\na_X(A(Y,Z))-A(Y,\na_XZ)-\cdots\\
&=& X(\varphi_2)A(Y,Z)+A^{\na_X\calR}(Y,Z)+A(\na_XY,Z)-Y(\varphi_2)A(X,Z)\\
& & -A^{\na_Y\calR}(X,Z)-A(\na_YX,Z)-A(\na_XY-\na_YX-\calR(X,Y),Z)\\
&=& \dx\varphi_2\wedge A(X,Y)Z+A^{\na_X\calR}(Y,Z)-A^{\na_Y\calR}(X,Z)+A(\calR(X,Y),Z)
\end{eqnarray*}
as we wished.
\end{proof}
In a very similar computation as the above we find:
\begin{prop}
The $B$ tensor satisfies
 \begin{equation}
  \begin{split}
  \dxna B(X,Y)Z\ =\ \langle\na_X\grad\varphi_2,Z\rangle Y^v-\langle\na_Y\grad\varphi_2,Z\rangle X^v +Z(\varphi_2)\calR(X,Y) \hspace{1cm}\\
-\delta\bigl(2X(\varphi_2)\langle Y^v,Z^v\rangle-2Y(\varphi_2)\langle X^v,Z^v\rangle-\langle\calR(X,Y),Z\rangle\bigr.\grad\varphi_2\\
 \bigl.-\langle Y^v,Z^v\rangle\na_X\grad\varphi_2+\langle X^v,Z^v\rangle\na_Y\grad\varphi_2\bigr).
\hspace*{3.2cm}
  \end{split}
 \end{equation}
\end{prop}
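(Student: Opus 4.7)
The plan is to follow the same template as part~4 of Proposition~\ref{partesdacurvatura}, since $B$ is again an $\End{TTM}$-valued $1$-form. The starting point will be the identity
$$\dxna B(X,Y)Z \ =\ (\na_X B)(Y,Z) - (\na_Y B)(X,Z) + B(\calR(X,Y),Z),$$
obtained by writing $[X,Y]=\na^*_XY-\na^*_YX-\calR(X,Y)$ and using that the torsion of $\na^*$ is $\calR$ (cf.\ \cite[Proposition 5.1]{Alb3}). Since $\calR(X,Y)\in V$, the torsion summand expands immediately as $Z(\varphi_2)\calR(X,Y)-\delta\langle\calR(X,Y),Z\rangle\grad\varphi_2$, producing two of the terms in the stated formula.

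The remaining task is to compute $(\na_X B)(Y,Z)=\na_X(B(Y,Z))-B(\na_X Y,Z)-B(Y,\na_X Z)$ from the definition $B(Y,Z)=Z(\varphi_2)Y^v-\delta\langle Y^v,Z^v\rangle\grad\varphi_2$. Expanding $\na_X(B(Y,Z))$ by Leibniz requires three elementary inputs: first, the Hessian identity $X(Z(\varphi_2))=(\na_X Z)(\varphi_2)+\langle\na_X\grad\varphi_2,Z\rangle$, which follows because $\varphi_2$ is $\pi$-basic and $\na^*$ pulls back $\na^g$, so the computation reduces to the standard symmetric-Hessian identity on $M$; second, the preservation of the splitting $H\oplus V$ by $\na^*$ together with its metric property on each summand, yielding $\na_X Y^v=(\na_X Y)^v$ and $X\langle Y^v,Z^v\rangle=\langle(\na_X Y)^v,Z^v\rangle+\langle Y^v,(\na_X Z)^v\rangle$; and third, the logarithmic derivative $X(\delta)=2X(\varphi_2)\delta$, valid because $f_1$ is constant and $\delta=\expo^{2\varphi_2}/f_1$. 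After substituting, the subterms in $\na_X(B(Y,Z))$ containing $(\na_X Y)^v$ or $(\na_X Z)^v$ are exactly neutralised by $B(\na_X Y,Z)$ and $B(Y,\na_X Z)$, so that the surviving expression is tensorial and of the schematic form $\langle\na_X\grad\varphi_2,Z\rangle Y^v$ plus $\delta$-multiples of $X(\varphi_2)\langle Y^v,Z^v\rangle\grad\varphi_2$ and $\langle Y^v,Z^v\rangle\na_X\grad\varphi_2$.

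Antisymmetrising in $X\leftrightarrow Y$ and adding the torsion piece $B(\calR(X,Y),Z)$ then reassembles the full right-hand side of the proposition. The main obstacle will be purely bookkeeping: the expansion of $\na_X(B(Y,Z))$ produces seven subterms, and each non-tensorial piece (those containing $\na_X Y$ or $\na_X Z$) must be verified to cancel exactly before the surviving terms can be grouped under the common $-\delta(\cdots)$ factor displayed in the statement. I would organise the computation by columns indexed by the target directions $Y^v,\ X^v,\ \calR(X,Y),\ \grad\varphi_2,\ \na_X\grad\varphi_2,\ \na_Y\grad\varphi_2$, keeping close attention both to the conventions for the torsion sign of $\na^*$ and to the horizontal/vertical lift convention for $\grad\varphi_2$ in the definition of $B$.
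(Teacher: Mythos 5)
Your plan is correct and is exactly the route the paper takes: the paper offers no separate argument beyond ``a very similar computation as the above,'' meaning precisely the part-4 template you reproduce --- expand $\dxna B(X,Y)Z=(\na_XB)(Y,Z)-(\na_YB)(X,Z)+B(\calR(X,Y),Z)$ via the torsion identity $[X,Y]=\na_XY-\na_YX-\calR(X,Y)$, apply Leibniz with the Hessian identity, the metricity of $\na^*$ on $H\oplus V$, and $X(\delta)=2X(\varphi_2)\delta$, and check that the non-tensorial terms cancel. Your identification of the surviving terms (including the coefficients $-2\delta$ and $-\delta$ hidden behind your phrase ``$\delta$-multiples'') agrees with what the flat-case theorem in Section~\ref{CurvatureofTM} independently forces, so the only caution is the loose $\bigl(\ \bigr)$ delimiters in the displayed statement, which should be read so that $\grad\varphi_2$ multiplies only the three scalar coefficients preceding it.
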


Now, we want to compute the curvature of $\na^G$. As the reader might see, the development of $\dxna C+C\wedge C$ is quite long when $C=\dx\varphi_2.1^v-\frac{1}{2}\calR+A+B$. So we shall proceed with two particular cases. The first is well at hand. The second is in the next section.
\begin{teo}
 Suppose $f_1>0$ is a constant, $f_2=\expo^{2\varphi_2}$ and the connection $\na$ is flat, so that
\begin{equation}
\na^G_XY=\na_XY+X(\varphi_2)Y^v+Y(\varphi_2)X^v-\delta\langle X^v,Y^v\rangle\grad\varphi_2.
\end{equation}
Then the Riemannian curvature tensor of $TM$ with metric $G=g^{f_1,f_2}$ is given by
\begin{equation}
  \begin{split}
  R^G(X,Y)Z\ =\ \bigl(X(\varphi_2)Z(\varphi_2)+\delta\epsilon^2\langle X^v,Z^v\rangle+\langle\na_X\grad\varphi_2,Z\rangle\bigr)Y^v\\
-\bigl(Y(\varphi_2)Z(\varphi_2)+\delta\epsilon^2\langle Y^v,Z^v\rangle+\langle\na_Y\grad\varphi_2,Z\rangle\bigr)X^v \hspace{.5cm}\\
-\delta\bigl(X(\varphi_2)\langle Y^v,Z^v\rangle- Y(\varphi_2)\langle X^v,Z^v\rangle\bigr)\grad\varphi_2  \hspace*{1.2cm}\\
 -\delta\langle Y^v,Z^v\rangle\na_X\grad\varphi_2+\delta\langle X^v,Z^v\rangle\na_Y\grad\varphi_2
\hspace*{.3cm}
  \end{split}
\end{equation}
where $\epsilon=\|\grad\varphi_2\|$.
\end{teo}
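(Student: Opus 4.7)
The plan is to apply the standard decomposition $R^{\na'} = R^\na + \dxna C + C\wedge C$ with $\na := \na^*\oplus\na^*$ and $C := \na^G - \na$; this form is legitimate because the flatness of $\nag$ implies $\calR = 0$, hence the torsion $T^{\na^*} = \calR$ of the auxiliary connection also vanishes. Flatness gives $R^{\na^*} = \pi^* R^\nag = 0$, so $R^\na = 0$ on $TTM$. Since $A$ is defined through $\calR$, we also have $A \equiv 0$, and the $-\tfrac12\calR$ piece of the Levi--Civita formula vanishes. Hence $C = K + B$, where $K_X Y := X(\varphi_2) Y^v$ is the discrepancy between $\na^{*,f_2,'}$ and $\na^*$ on $V$, and $B$ is the tensor recalled just before the theorem.

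The computation splits into six pieces: $\dxna K$, $K\wedge K$, $\dxna B$, $K\wedge B$, $B\wedge K$, $B\wedge B$. The first two vanish immediately ($\dxna K = \dx\dx\varphi_2.1^v = 0$ and $K\wedge K = 0$ since the scalar factors commute; this is essentially Proposition \ref{partesdacurvatura}(1)). For $\dxna B$, I would substitute $\calR = 0$ into the stated formula, keeping the $\langle\na_X\grad\varphi_2, Z\rangle Y^v$-pair, a $\pm 2\delta X(\varphi_2)\langle Y^v, Z^v\rangle\grad\varphi_2$-pair, and a $\pm\delta\langle Y^v, Z^v\rangle\na_X\grad\varphi_2$-pair.

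The heart of the proof is the three remaining wedge products; they reduce once one exploits (i) $W(\varphi_2) = 0$ for every vertical $W$, since $\varphi_2$ is pulled back from $M$, and (ii) the $\grad\varphi_2$ inside $B$ is the horizontal lift, so $\grad\varphi_2(\varphi_2) = \|\grad\varphi_2\|^2 = \epsilon^2$. By (i), $K\wedge B$ picks up only the vertical part $Z(\varphi_2) Y^v$ of $B(Y,Z)$, yielding $X(\varphi_2) Z(\varphi_2) Y^v - Y(\varphi_2) Z(\varphi_2) X^v$; and $B\wedge K$ acts on the vertical vector $K(Y,Z) = Y(\varphi_2) Z^v$ producing only $\grad\varphi_2$-directed terms of the form $\pm\delta X(\varphi_2)\langle Y^v, Z^v\rangle\grad\varphi_2$. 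Observation (ii) is what manufactures the $\delta\epsilon^2$-coefficient: inside $B(X, B(Y,Z))$, the horizontal component of $V := B(Y,Z)$ forces $V(\varphi_2) = -\delta\langle Y^v, Z^v\rangle\epsilon^2$, which feeds the leading slot as $-\delta\epsilon^2\langle Y^v, Z^v\rangle X^v$; antisymmetrizing in $(X,Y)$ (and noting the symmetric $Z(\varphi_2)\langle X^v, Y^v\rangle\grad\varphi_2$ pieces cancel) yields $\delta\epsilon^2[\langle X^v, Z^v\rangle Y^v - \langle Y^v, Z^v\rangle X^v]$ exactly.

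Summing the six contributions, the $\pm 2\delta$ factors from $\dxna B$ combine with the $\pm\delta$ factors from $B\wedge K$ to telescope into the single $-\delta(X(\varphi_2)\langle Y^v, Z^v\rangle - Y(\varphi_2)\langle X^v, Z^v\rangle)\grad\varphi_2$ of the theorem, while the $B\wedge B$ contribution merges into the $\dxna B + K\wedge B$ coefficients of $Y^v$ and $X^v$ to produce the displayed compact form. The main obstacle is purely combinatorial bookkeeping: tracking horizontal versus vertical components of each $B$-factor and ensuring the many near-duplicate terms with factors of $2$ and opposite signs combine correctly across the six pieces.
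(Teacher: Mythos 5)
Your proposal is correct and follows essentially the same route as the paper: with $\calR=0$ (hence $A=0$ and vanishing torsion of $\na^*$) one writes $C=\dx\varphi_2.1^v+B$, notes $\dxna(\dx\varphi_2.1^v)=0$ and $(\dx\varphi_2.1^v)\wedge(\dx\varphi_2.1^v)=0$, and assembles $R^G=\dxna B+C\wedge C$ from the remaining wedge products, with $B\wedge B(X,Y)Z=\delta\epsilon^2(\langle X^v,Z^v\rangle Y^v-\langle Y^v,Z^v\rangle X^v)$ arising exactly as you describe from $\grad\varphi_2(\varphi_2)=\epsilon^2$. Your bookkeeping of the $-2\delta+\delta=-\delta$ telescoping and of the cancellation of the symmetric $Z(\varphi_2)\langle X^v,Y^v\rangle\grad\varphi_2$ terms reproduces the paper's computation.
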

\begin{proof}
 After some computations we find
\[  B\wedge B(X,Y)Z=\delta\epsilon^2(\langle X^v,Z^v\rangle Y^v-\langle Y^v,Z^v\rangle X^v)  \]
and
\begin{eqnarray*} 
C\wedge C(X,Y)Z &=& (\dx\varphi_2.1^v\wedge B+B\wedge\dx\varphi_2.1^v+B\wedge B)(X,Y)Z\\
 &=& X(\varphi_2)Z(\varphi_2)Y^v-Y(\varphi_2)Z(\varphi_2)X^v+Y(\varphi_2)B(X,Z^v)\\
& & \ \ -X(\varphi_2)B(Y,Z^v)+B\wedge B(X,Y)Z\\
 & = & X(\varphi_2)(Z(\varphi_2)Y^v+\delta\langle Y^v,Z^v\rangle\grad\varphi_2)\\
& & \ \ -Y(\varphi_2)(Z(\varphi_2)X^v+\delta\langle X^v,Z^v\rangle\grad\varphi_2)+B\wedge B(X,Y)Z.
\end{eqnarray*}
Adding to $\dxna C=\dxna B$ above, we deduce $R^G=\dxna C+C\wedge C$.
\end{proof}
The case when $\grad\varphi_2$ is parallel may be further  developed. Straightforward computation yields the following result.
\begin{coro}
Suppose $(M,g)$ is a flat Riemannian manifold and the function $f_2$ verifies $\na\dx\varphi_2=0$. Then the sectional curvature of the metric $G=g^{f_1,f_2}$ on a plane $\Pi$ spanned by the orthonormal basis $X,Y$ is
\begin{equation}
 \begin{split}
 k(\Pi) \ =\ G(R^G(X,Y)Y,X)\hspace{6.8cm}\\
= -f_2\epsilon^4\|bX^v-aY^v\|^2-f_2\epsilon^2\delta(\|X^v\|^2\|Y^v\|^2-\langle X^v,Y^v\rangle^2), 
 \end{split}
\end{equation}
where $X=a\,\grad\varphi_2+X'+X^v,\ Y=b\,\grad\varphi_2+Y'+Y^v$ and $X',Y'\in H\cap(\grad\varphi_2)^\perp,\ \,a,b\in\R$. In particular, $k(\Pi)\leq0$. 
\end{coro}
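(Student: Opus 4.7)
The assumption $\na\dx\varphi_2=0$ is equivalent to $\na\grad\varphi_2=0$, so the last two terms of the curvature formula in the preceding theorem drop out. The plan is then to set $Z=Y$ and contract with $X$ against $G=f_1\pi^*g\oplus f_2\pi^*g$. Setting $Z=Y$ leaves three summands in $R^G(X,Y)Y$: a multiple of $Y^v$, a multiple of $X^v$, and a multiple of $\grad\varphi_2$.

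The decomposition $X=a\grad\varphi_2+X'+X^v$ with $X'\perp\grad\varphi_2$ in $H$ is what makes the contraction clean. It gives $X(\varphi_2)=a\epsilon^2$, $Y(\varphi_2)=b\epsilon^2$, and
\[ G(\grad\varphi_2,X)=f_1 a\epsilon^2=f_1 X(\varphi_2), \qquad G(X^v,X)=f_2\|X^v\|^2, \qquad G(Y^v,X)=f_2\langle X^v,Y^v\rangle. \]
Pairing the three summands of $R^G(X,Y)Y$ with $X$ and using $f_1\delta=f_2$, the two cross-contributions each produce a copy of $f_2 X(\varphi_2)Y(\varphi_2)\langle X^v,Y^v\rangle$, so they combine. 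A short regrouping yields
\[ G(R^G(X,Y)Y,X)=-f_2\bigl(X(\varphi_2)^2\|Y^v\|^2-2X(\varphi_2)Y(\varphi_2)\langle X^v,Y^v\rangle+Y(\varphi_2)^2\|X^v\|^2\bigr)-f_2\delta\epsilon^2\bigl(\|X^v\|^2\|Y^v\|^2-\langle X^v,Y^v\rangle^2\bigr). \]
Finally, substituting $X(\varphi_2)=a\epsilon^2$, $Y(\varphi_2)=b\epsilon^2$ turns the first parenthesis into $\epsilon^4\|bX^v-aY^v\|^2$, producing the stated formula. Non-positivity is then immediate: $f_2,\delta>0$, the square $\|bX^v-aY^v\|^2$ is non-negative, and $\|X^v\|^2\|Y^v\|^2-\langle X^v,Y^v\rangle^2\geq 0$ by Cauchy--Schwarz.

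The main obstacle is purely clerical: one must carefully distinguish the $\pi^*g$-inner product $\langle\cdot,\cdot\rangle$ from the weighted $G$, keep track of the factor $f_1\delta=f_2$ at each horizontal contraction, and exploit the orthogonality $X'\perp\grad\varphi_2$ so that the entire horizontal component of $X$ enters the answer only through the scalar $X(\varphi_2)=a\epsilon^2$. Note that the orthonormality of $X,Y$ with respect to $G$ plays no role in the identity itself; it is only needed to call the resulting expression the sectional curvature of $\Pi$.
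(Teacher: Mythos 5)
Your computation is correct and is exactly the ``straightforward computation'' the paper invokes: set $Z=Y$ in the preceding theorem's formula, drop the Hessian terms using $\na\grad\varphi_2=0$, and contract with $X$ against $G$, using $f_1\delta=f_2$ and $X(\varphi_2)=a\epsilon^2$, $Y(\varphi_2)=b\epsilon^2$ to regroup the result into $-f_2\epsilon^4\|bX^v-aY^v\|^2-f_2\epsilon^2\delta(\|X^v\|^2\|Y^v\|^2-\langle X^v,Y^v\rangle^2)$. Your remark that the two cross-terms combine into $-2f_2X(\varphi_2)Y(\varphi_2)\langle X^v,Y^v\rangle$ and that orthonormality is only needed to interpret the expression as $k(\Pi)$ are both accurate.
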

Hence on points $x$ where $\grad\varphi_2\neq0$ the fibres $T_xM$ are hyperbolic totally geodesic submanifolds.

In the previous conditions, we observe that the equations of a geodesic curve $\Theta$ in $TM$ appear as:
\begin{equation}
 \left\{\begin{array}{l}
 \na_{\dot{\Theta}}{\dot{\Theta}}^h-f_2\langle{\dot{\Theta}}^v,{\dot{\Theta}}^v\rangle\grad\varphi_2=0\\
\na_{\dot{\Theta}}{\dot{\Theta}}^v+2{\dot{\Theta}}(\varphi_2){\dot{\Theta}}^v=0.
\end{array}\right.
\end{equation}
So it would be interesting at least in this case to solve the problem of knowing when is $\na^G$ complete. (The completeness of a pull-back connection seems to be an open problem.)

If $M$ is a simply connected flat Riemannian manifold and $\na^G$ is a complete
connection, then $TM$ is very close to being a Stein manifold. To apply a famous
result of Wu, \cite{Wu}, we would need $TM$ to be K\"ahler with $k\leq0$, but
then we are asking $f_2$ to be a constant by \cite[Corollary 6.3]{Alb3}.

\subsection{Curvature of $g^{f_1,f_2}$ with $f_1,f_2$ constants}
\label{CurvatureofTMcontinued}

The second particular situation we must try to investigate is when $f_2$ is a constant. So we continue with $\na=\nag$ the Levi-Civita connection of $M$. We may write simply
\begin{equation}
 \na^G=\na+C\qquad\quad\mbox{with}\qquad\quad C=-\frac{1}{2}\calR+A.
\end{equation}
The connection $D^*=\na^*\oplus\na^*$, so we write it as $\na$. Since $\calR\wedge\calR=0$, the curvature of $G$ is 
\begin{equation}
 R^G=R^\na-\frac{1}{2}\dxna\calR+\dxna A
-\frac{1}{2}\calR\wedge A-\frac{1}{2}A\wedge\calR+A\wedge A.
\end{equation}
Notice $R^\na$ stands for $R^{\na^*}\oplus R^{\na^*}$. Some parts of the tensor
$R^G$ were computed in Proposition \ref{partesdacurvatura}, namely those
involving $\dxna$. Now
\begin{equation}
 \begin{split}
 \dxna\calR(X,Y)Z = (\na_XR)(Y,Z)\xi-(\na_YR)(X,Z)\xi+R(Y,Z)X^v-R(X,Z)Y^v \\
=(\na_X\calR)(Y,Z)-(\na_Y\calR)(X,Z),\hspace{2cm}
 \end{split}
\end{equation}
\begin{equation}
 \dxna A(X,Y)Z = -A^{\na_Y\calR}(X,Z)+A^{\na_X\calR}(Y,Z)+A(\calR(X,Y),Z).
\end{equation}
The others parts do not simplify nor cancel each other, as the reader may notice reading their nature in $H\oplus V$.

Let $e_1,\ldots,e_{m}$ be a real $g$-orthonormal basis of $TM$ at a given point. This is immediately lifted to $H$ and then to $V$ by $\theta$, giving a $g^S$-orthonormal basis. Writing
\begin{equation}
 A(X,Y)=\sum\langle A(X,Y),e_i\rangle e_i=\frac{\delta}{2}\sum(\langle\calR(X,e_i),Y\rangle+\langle
\calR(Y,e_i),X\rangle)e_i,
\end{equation}
we have the Gauss-Codazzi type equations
\begin{equation}
 \begin{split}
 -\frac{1}{2}\calR\wedge A(X,Y)Z = -\frac{1}{2}\calR(X,A(Y,Z))+\frac{1}{2}\calR(Y,A(X,Z))\hspace{2cm}\\
= -\frac{\delta}{4}\sum_j\bigl((\langle\calR(Y,e_j),Z\rangle+\langle\calR(Z,e_j),Y\rangle)
\calR(X,e_j) \quad\\
-(\langle\calR(X,e_j),Z\rangle+\langle\calR(Z,e_j),X\rangle)\calR(Y,e_j)\big),
\end{split}
\end{equation}
\begin{equation}
 \begin{split}
-\frac{1}{2} A\wedge\calR(X,Y)Z = -\frac{1}{2}A(X,\calR(Y,Z))+\frac{1}{2}A(Y,\calR(X,Z))\hspace{2cm}\\
= -\frac{\delta}{4}\sum_i(\langle\calR(X,e_i),\calR(Y,Z)\rangle-\langle\calR(Y,e_i),\calR(X,Z)\rangle)e_i \\
\end{split}
\end{equation}
and
\begin{equation}
 \begin{split}
 A\wedge A(X,Y)Z = A(X,A(Y,Z))-A(Y,A(X,Z))\hspace{3.5cm}\\
= \frac{\delta}{2}\sum_i(\langle\calR(A(Y,Z),e_i),X\rangle-\langle\calR(A(X,Z),e_i),Y\rangle)e_i \quad\\
= \frac{\delta^2}{4}\sum_{i,j}^{m}\bigl((\langle\calR(Y,e_j),Z\rangle+\langle\calR(Z,e_j),Y\rangle)
\langle\calR(e_j,e_i),X\rangle \quad\\
-(\langle\calR(X,e_j),Z\rangle+\langle\calR(Z,e_j),X\rangle)\langle\calR(e_j,e_i),Y\rangle\bigr)e_i.
\end{split}
\end{equation}
Also $A(X,\calR(Y,Z))=\frac{\delta}{2}\sum\langle\calR(X,e_i),\calR(Y,Z)\rangle e_i$. Now we have
\begin{equation}\label{curvaturaRGhhh}
 \begin{split}
  R^G(X^h,Y^h)Z^h=R(X^h,Y^h)Z^h-\frac{1}{2}(\na_{X^h}\calR)(Y^h,Z^h)
+\frac{1}{2}(\na_{Y^h}\calR)(X^h,Z^h)+  \hspace{0.5cm}\\
+A(\calR(X^h,Y^h),Z^h)-\frac{1}{2}A(X^h,\calR(Y^h,Z^h))+\frac{1}{2}A(Y^h,\calR(X^h,Z^h)),
 \end{split}
\end{equation}
\begin{equation}\label{formula70epoucos}
 \begin{split}
  R^G(X^v,Y^h)Z^h= \hspace{5cm}\\
=-\frac{1}{2}(\na_{X^v}\calR)(Y^h,Z^h)
-A^{\na_{Y^h}\calR}(X^v,Z^h)+\frac{\delta}{4}\sum\langle\calR(Z^h,e_j),X^v\rangle\calR(Y^h,e_j)\\
=-\frac{1}{2}R(Y^h,Z^h)X^v
-A^{\na_{Y^h}\calR}(X^v,Z^h)+\frac{\delta}{4}\sum\langle\calR(Z^h,e_j),X^v\rangle\calR(Y^h,e_j),
 \end{split}
\end{equation}
\begin{equation}
 \begin{split}
  R^G(X^v,Y^h)Z^v=A^{\na_{X^v}\calR}(Y^h,Z^v)
+\frac{\delta^2}{4}\sum\langle\calR(Y^h,e_j),Z^v\rangle\langle\calR(e_j,e_i),X^v\rangle e_i,
\hspace{1cm}
 \end{split}
\end{equation}
\begin{equation}
 \begin{split}
  R^G(X^h,Y^h)Z^v=R(X^h,Y^h)Z^v-A^{\na_{Y^h}\calR}(X^h,Z^v)+ A^{\na_{X^h}\calR}(Y^h,Z^v)+
\hspace{1.4cm} \\
+\frac{\delta}{4}\sum\bigl(\langle\calR(X^h,e_j),Z^v\rangle\calR(Y^h,e_j)
-\langle\calR(Y^h,e_j),Z^v\rangle\calR(X^h,e_j)\bigr),
 \end{split}
\end{equation}
\begin{equation}\label{curvaturaRGvvh}
 \begin{split}
  R^G(X^v,Y^v)Z^h=-A^{\na_{Y^v}\calR}(X^v,Z^h)+A^{\na_{X^v}\calR}(Y^v,Z^h)+\hspace{4cm} \\
+\frac{\delta^2}{4}\sum\bigl(\langle\calR(Z^h,e_j),Y^v\rangle\langle\calR(e_j,e_i),X^v\rangle 
-\langle\calR(Z^h,e_j),X^v\rangle\langle\calR(e_j,e_i),Y^v\rangle\bigr) e_i 
 \end{split}
\end{equation}
and, clearly, $R^G(X^v,Y^v)Z^v=0$.

The simplification in formula (\ref{formula70epoucos}) is due to property 2 in
Proposition \ref{partesdacurvatura}.
In order to find the Ricci curvature of $G$ we let $R^G(X,Y,Z,W)$ denote the 4-tensor $G(R^G(X,Y)Z,W)$. The same we agree in denoting $R$ with the metric $g$. We only need
\begin{equation}
 \begin{split}
  R^G(X^h,Y^h,Y^h,W^h)  \hspace{5cm}\\
= f_1R(X^h,Y^h,Y^h,W^h)+f_1\langle A(\calR(X^h,Y^h),Y^h),W^h\rangle
+\frac{f_1}{2}\langle A(Y^h,\calR(X^h,Y^h)),W^h\rangle\\
=f_1R(X^h,Y^h,Y^h,W^h)+\frac{f_2}{2}\langle\calR(Y^h,W^h),\calR(X^h,Y^h)\rangle+
\frac{f_2}{4}\langle\calR(Y^h,W^h),\calR(X^h,Y^h)\rangle \\
=f_1R(X^h,Y^h,Y^h,X^h)+\frac{3}{4}f_2\langle\calR(Y^h,W^h),\calR(X^h,Y^h)\rangle,
 \end{split}
\end{equation}
\begin{equation} \label{formula70etal}
 \begin{split}
  R^G(X^h,Y^v,Y^v,W^h)  \hspace{5cm}\\
=-f_1\langle A^{\na_{Y^v}\calR}(X^h,Y^v),W^h\rangle
-\frac{f_1\delta^2}{4}\sum_{i,j=1}^m\langle\calR(X^h,e_j),Y^v\rangle\langle\calR(e_j,e_i),Y^v\rangle\langle e_i,W^h\rangle\\
=-\frac{f_2}{2}\langle(\na_{Y^v}\calR)(X^h,W^h),Y^v\rangle+
\frac{f_1\delta^2}{4}\sum\langle\calR(X^h,e_j),Y^v\rangle\langle\calR(W^h,e_j),Y^v\rangle\\
=\frac{f_1\delta^2}{4}\sum\langle\calR(X^h,e_j),Y^v\rangle\langle\calR(W^h,e_j),Y^v\rangle,  \hspace{1cm}
 \end{split}
\end{equation}
\begin{equation}
 R^G(X^v,Y^h,Y^h,W^h)=R^G(W^h,Y^h,Y^h,X^v)=\frac{f_2}{2}\langle(\na_{Y^h}\calR)(W^h,Y^h),X^v\rangle,\ \ \
\end{equation}
\begin{equation}
  R^G(X^v,Y^h,Y^h,W^v)=\frac{f_2\delta}{4}\sum_j\langle\calR(Y^h,e_j),W^v\rangle\langle\calR(Y^h,e_j),X^v\rangle,
\end{equation}
\begin{equation}
R^G(X^v,Y^v,Y^v,W^h)=0,  \hspace{1cm}
\end{equation}
\begin{equation}
 \begin{split}
  R^G(X^h,Y^h,Y^h,W^v)=\frac{f_2}{2}\langle(\na_{Y^h}\calR)(X^h,Y^h),W^v\rangle
\end{split}
\end{equation}
and of course $R^G(X^h,Y^v,Y^v,W^v)=0$. The simplification in formula
(\ref{formula70etal}) is due to property 2 in Proposition
\ref{partesdacurvatura} and the skew-symmetries of $R$. Henceforth the Ricci
curvature of $G$, the trace of the Ricci endomorphism, is given by
\begin{equation}
 \begin{split}\label{Riccihorhor}
  \ric^G(X^h,Y^h)\hspace{5cm}\\
=\sum_{i=1}^mR^G(X^h,\dfrac{e_i}{\sqrt{f_1}},\dfrac{e_i}{\sqrt{f_1}},Y^h)
+R^G(X^h,\dfrac{\theta e_i}{\sqrt{f_2}},\dfrac{\theta e_i}{\sqrt{f_2}},Y^h)\ =\ \ric(X^h,Y^h)  \hspace{.6cm}\\ 
-\frac{3}{4}\delta\sum_{j=1}^m\langle\calR(X^h,e_j),\calR(Y^h,e_j)\rangle
+\frac{\delta}{4}\sum_{i,j=1}^m\langle\calR(X^h,e_j),\theta e_i\rangle\langle\calR(Y^h,e_j),\theta e_i\rangle\\
=\:\ric(X^h,Y^h)-\frac{\delta}{2}\sum_{j=1}^m\langle\calR(X^h,e_j),\calR(Y^h,e_j)\rangle,
\end{split}
\end{equation}
\begin{equation}\label{Ricciverver}
\ric^G(X^v,Y^v)=\frac{\delta^2}{4}\sum_{i,j=1}^m\langle\calR(e_i,e_j),X^v\rangle\langle\calR(e_i,e_j),Y^v\rangle,
\end{equation}
\begin{equation}\label{Riccihorver}
  \ric^G(X^h,Y^v)=-\frac{\delta}{2}\sum_{i=1}^m\langle(\na_i\calR)(e_i,X^h),Y^v\rangle.
\end{equation}
And the scalar curvature is
\begin{equation}\label{curvaturaescalarTM}
 \begin{split}
 S^G=\sum_{k=1}^m\frac{1}{f_1}\ric^G(e_k,e_k)+\frac{1}{f_2}\ric^G(\theta e_k,\theta e_k)\qquad\\
=\frac{S}{f_1}-\frac{f_2}{4f_1^2}\sum_{i,j,k=1}^{m}(\calR_{ijk})^2
 \end{split}
\end{equation}
where $\calR_{ijk}=\langle\calR(e_i,e_j),\theta e_k\rangle=\langle R(e_i,e_j)u,e_k\rangle$ on each point $u\in TM$. Of course, $\ric$ and $S$ above denote respectively the Ricci and scalar curvatures of $M$. 

The following result generalises another from \cite{Seki} strictly for the Sasaki metric.
\begin{prop}
The Riemannian manifold $(TM,G)$ is Einstein $\Leftrightarrow$ $TM$ is flat $\Leftrightarrow$ $M$ is flat.
\end{prop}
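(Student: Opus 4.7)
The plan is to chain three implications, with only ``Einstein $\Rightarrow$ $M$ flat'' requiring real work. If $M$ is flat then $R=0$, hence $\calR=\pi^*R(\cdot,\cdot)\xi=0$, $A=0$, and the correction $C=-\tfrac{1}{2}\calR+A$ vanishes. Thus $\na^G=\na^*\oplus\na^*$, whose curvature is $\pi^*R\oplus\pi^*R=0$, so $TM$ is flat. A flat manifold is trivially Einstein (with $\lambda=0$).

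For the essential implication I would write $\ric^G=\lambda G$ with $\lambda$ a priori only a function on $TM$, and read it off on vertical-vertical pairs through (\ref{Ricciverver}). Identifying $X^v,Y^v\in V_{(x,u)}$ with $\bar X,\bar Y\in T_xM$, the Einstein equation at $(x,u)\in TM$ becomes
\begin{equation*}
\frac{\delta^2}{4}\sum_{i,j}\langle R_x(e_i,e_j)u,\bar X\rangle\,\langle R_x(e_i,e_j)u,\bar Y\rangle
=\lambda(x,u)\,f_2\,\langle \bar X,\bar Y\rangle.
\end{equation*}
The pivotal step is to specialize $\bar X=\bar Y=u$ (with $u\neq 0$): by the skew-symmetry $\langle R(X,Y)Z,W\rangle=-\langle R(X,Y)W,Z\rangle$ each summand $\langle R_x(e_i,e_j)u,u\rangle^2$ on the left vanishes, forcing $\lambda(x,u)=0$, and by continuity $\lambda\equiv 0$. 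Then with $\lambda=0$, choosing $\bar X=\bar Y$ turns the left side into a sum of squares, which must vanish term by term, giving $\langle R_x(e_i,e_j)u,\bar X\rangle=0$ for all indices and all $u,\bar X$; hence $R\equiv 0$.

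I expect the main obstacle to be conceptual rather than computational: one must resist reaching for Schur's lemma to upgrade $\lambda$ to a constant, since the desired conclusion is strictly stronger and the quadratic nature of the right-hand side in $u$ is exactly what is needed. The clean route uses that the left-hand vertical Ricci expression is a homogeneous quadratic in $u$ while the right-hand $\lambda f_2\langle\bar X,\bar Y\rangle$ is pinned pointwise, and that antisymmetry of $R$ supplies an immediate diagonal cancellation. Once this observation is made, the remaining argument collapses to a few lines and none of the other curvature formulas (\ref{curvaturaRGhhh})--(\ref{curvaturaescalarTM}) is needed.
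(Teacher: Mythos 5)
Your proof is correct, but it takes a genuinely different route from the paper's. The paper argues through the scalar curvature: from (\ref{curvaturaescalarTM}), $S^G=\frac{S}{f_1}-\frac{f_2}{4f_1^2}\sum(\calR_{ijk})^2$, and since an Einstein metric has constant scalar curvature while the second term is a nonpositive homogeneous quadratic form in $u$ along each fibre (vanishing at $u=0$), constancy forces $\calR_{ijk}\equiv 0$, i.e.\ $R=0$. You instead work with the single component (\ref{Ricciverver}) of the Ricci tensor, use the antisymmetry $\langle R(e_i,e_j)u,u\rangle=0$ to force $\lambda=0$ by testing on $\bar X=\bar Y=u$, and then exploit positivity of the resulting sum of squares. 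Your version buys something real: it never invokes constancy of $\lambda$ (neither Schur's lemma nor the definitional convention), so it proves the stronger statement that $\ric^G=\lambda G$ with $\lambda$ merely a function already forces flatness, and it isolates the one curvature component actually needed, as you note. The paper's version is a one-liner once the scalar curvature formula is in hand, and its ``quadratic part varying in $\|u\|$'' observation is the same homogeneity phenomenon you identify, just aggregated over all indices rather than read off the vertical Ricci. The remaining implications (flat $M$ $\Rightarrow$ flat $TM$ $\Rightarrow$ Einstein), which the paper leaves implicit, you handle correctly via the vanishing of $C=-\tfrac{1}{2}\calR+A$ and $R^{\na^*}=\pi^*R$.
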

\begin{proof}
If $TM$ is Einstein then $S^G$ is constant. In the present case it has a quadratic part varying in $\|u\|$, unless all $\calR_{ijk}=0,\ \forall u$.
\end{proof}
It is worth recalling the following results. The Sasaki metric of $TM$ is locally symmetric if and only if $M$ is flat (\cite{Kow1}). And, regarding what we continue studying next, the tangent unit sphere bundle is locally symmetric if and only if $(M,g)$ is flat or locally $(S^2(1),g_{\mathrm{std}})$.  Conformally flat is stronger: reserved for the locally standard 2-sphere (cf. \cite{Blair}). More recently it was proved semi-symmetric is the same as locally symmetric (\cite{BoeCal}).

\subsection{The second fundamental form of $S_rM$ and the Ricci and scalar curvature}

Let us start by recalling the theory of the second fundamental form of a Riemannian embedding. Suppose $Q^q$ is a submanifold of a Riemannian manifold $(N^{q+p},G)$ and $Q$ inherits the induced metric from $N$. Let $\na'$ denote the Levi-Civita connection of $N$ and let $X,Y$ be two vectors tangent to $Q$. Then we have the Gauss formula
\begin{equation}
 \na'_XY=\na_XY+\alpha(X,Y)
\end{equation}
where the sum respects the orthogonal decomposition $TQ\oplus TQ^\perp$. Passed the formality, $\na_XY$ is the Levi-Civita connection of $Q$. The clearly symmetric tensor
\begin{equation}
 \alpha:\Omega^0(TQ\otimes TQ)\lrr \Omega^0(TQ^\perp)
\end{equation}
is called the second fundamental form. Its trace $H^\alpha$ is the mean curvature vector. Let $\eta\in\Omega^0(TQ^\perp)$. Then we have the Weingarten formula $\na'_X\eta=-A_\eta X+D_X\eta$ where $A_\eta$ is a self-adjoint tensor on $TQ$ since $\langle A_\eta X,Y\rangle=-G(\na'_X\eta,Y)=G(\eta,\na'_XY)=G(\eta,\alpha(X,Y))$
and $D_X\eta$ is a metric connection on $TQ^\perp$. Finally we have the Gauss equation 
\begin{equation}\label{gaussequation}
 R(X,Y,Z,W)=R'(X,Y,Z,W)-G(\alpha(X,Z),\alpha(Y,W))+G(\alpha(Y,Z),\alpha(X,W)).
\end{equation}

We now resume with the study of the induced metric $G=g^{f_1,f_2}$ on the tangent sphere bundle $S_rM$ with radius function $r\in\cinf{M}$, with $\na=\nag$ and $f_1,f_2$ constant. Recall $m=n+1$ is the dimension of $M$.
\begin{prop}
$TS_rM=\{X\in TM:\ \langle X,\xi\rangle=rX(r)\}$.
\end{prop}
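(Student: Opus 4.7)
The plan is to exhibit $S_rM$ as the regular zero locus of a single smooth function on $TM$ and then read off its tangent spaces from the differential of that function, exploiting the identity $\na^*_X\xi=X^v$ already recorded in the introduction.

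First I would define $F\in\cinf{}(TM)$ by
\begin{equation*}
 F(u)=\langle u,u\rangle-r(\pi(u))^2,
\end{equation*}
so that by construction $S_rM=F^{-1}(0)$. It then suffices to compute $\dx F$ and show it is nowhere vanishing on $S_rM$, after which $TS_rM=\ker\dx F$.

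The computation of $\dx F$ splits into two pieces along the decomposition $TTM=H\oplus V$. For the first term, given any $X\in T_uTM$ I pick a curve in $TM$ realising $X$ and differentiate $\langle u(t),u(t)\rangle$ using the metric connection $\na^*$; combined with the identity $\na^*_X\xi=X^v$ stated in the introduction, this yields $\dx\langle\xi,\xi\rangle(X)=2\langle X^v,\xi\rangle=2\langle X,\xi\rangle$, the last equality since $\xi$ is vertical and $H\perp V$. For the second term, $r\circ\pi$ is constant on fibres, so only the horizontal component contributes: $\dx(r\circ\pi)(X)=\dx r(\dx\pi\,X)=X^h(r)$, which is the meaning we give to $X(r)$ in the paper. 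Therefore
\begin{equation*}
 \dx F(X)=2\langle X,\xi\rangle-2rX(r),
\end{equation*}
and $\ker\dx F$ is exactly the set described in the statement.

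It remains to verify that $0$ is a regular value of $F$, which guarantees both that $S_rM$ is a submanifold and that $TS_rM$ coincides with this kernel. Evaluating $\dx F$ on the spray $\xi$ itself gives $\dx F(\xi)=2\langle\xi,\xi\rangle-2r\cdot 0=2r^2$, which is nonzero at every point of $S_rM$ (where $r>0$). Hence $\dx F$ is surjective along the tangent sphere bundle and the identification follows. I do not expect any real obstacle: once the meaning of $X(r)$ as the horizontal derivative is fixed and the identity $\na^*_X\xi=X^v$ is invoked, the argument is just a two-line computation of $\dx F$.
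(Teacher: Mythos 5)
Your argument is essentially the paper's own proof: both differentiate the defining function $\langle\xi,\xi\rangle-r^2$ using the identity $\na^*_X\xi=X^v$ to read off the tangent space as the kernel of the differential. The only addition is your explicit check that $0$ is a regular value (via $\dx F(\xi)=2r^2\neq0$), which the paper leaves implicit; this is a welcome but minor refinement, not a different approach.
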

\begin{proof}
Indeed we have $\langle\xi,\xi\rangle-r^2=0$ defining the submanifold. Differentiating,
\begin{equation*}
X(\langle\xi,\xi\rangle-r^2)=2\langle\na^*_X\xi,\xi\rangle-2rX(r)=2(\langle X^v,\xi\rangle-rX(r))
\end{equation*}
we find the tangent space.
\end{proof}
In order to write the second fundamental form, we may write $\alpha$ as a scalar tensor:
\begin{equation}\label{alpha}
 \alpha(X,Y)=G(\na^G_XY,U^G)
\end{equation}
with $U^G$ a unit vector field defined on $S_rM$ and such that $U^G\perp^GTS_rM$. Writing
\begin{equation}
 U^G=a\grad r+b\xi
\end{equation}
for some functions $a,b$, we find the solution
\begin{equation}
 a=-\delta br\qquad\qquad\mbox{and}\qquad\qquad b=\frac{1}{r\sqrt{f_2+\delta f_2\tau^2}}
\end{equation}
where $\delta=f_2/f_1$ and $\tau=\|\grad r\|$.
\begin{prop}
The second fundamental form of $S_rM\subset TM$ with the induced metric $g^{f_1,f_2}$ and where $f_1,f_2$ are constants, is given by
\begin{equation}
\alpha(X,Y)=af_1(A(X,Y)(r)-\langle Y,\na_X\grad r\rangle)+bf_2(X(r)Y(r)-\langle Y^v,X^v\rangle).
\end{equation}
If $\na\dx r=0$, then the mean curvature is $H^\alpha=-\frac{n}{r\sqrt{f_2+\delta f_2\tau^2}}$.
\end{prop}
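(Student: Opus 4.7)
The plan is to compute $\alpha(X,Y)=G(\na^G_X Y, U^G)$ by direct substitution. Since $f_1,f_2$ are constants the tensor $B$ vanishes and $\na^G_X Y=\na_X Y-\tfrac{1}{2}\calR(X,Y)+A(X,Y)$; its horizontal part is $\na^*_X Y^h+A(X,Y)$ and its vertical part is $\na^*_X Y^v-\tfrac{1}{2}\calR(X,Y)$. I would pair this separately with $a\grad r$ (horizontal) and $b\xi$ (vertical), using $G(\cdot,\grad r)=f_1\langle\cdot^h,\grad r\rangle$ and $G(\cdot,\xi)=f_2\langle\cdot^v,\xi\rangle$.

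On the horizontal side, since $\grad r$ is the horizontal lift of the $M$-gradient and $\na^*=\pi^*\na$, the Leibniz rule gives $X(Y(r))=\langle\na^*_X Y^h,\grad r\rangle+\langle Y,\na_X\grad r\rangle$, so one reads off $f_1(X(Y(r))-\langle Y,\na_X\grad r\rangle+A(X,Y)(r))$. On the vertical side, $\na^*_X\xi=X^v$ gives $\langle\na^*_X Y^v,\xi\rangle=X\langle Y^v,\xi\rangle-\langle Y^v,X^v\rangle$, and the tangency condition $\langle Y^v,\xi\rangle=rY(r)$ for $Y\in TS_rM$ turns this into $X(r)Y(r)+rX(Y(r))-\langle Y^v,X^v\rangle$. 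The $\calR$ piece contributes nothing against $\xi$ because $\langle\calR(X,Y),\xi\rangle=g(R(X^h,Y^h)u,u)=0$ by skew-symmetry of $R$. The decisive observation is that the non-tensorial $X(Y(r))$ remainders collect with coefficient $af_1+bf_2 r$, which is exactly zero by $a=-\delta br$ and $\delta f_1=f_2$; this is of course the same identity that had made $U^G\perp^G TS_rM$ in the first place. What survives is the claimed expression for $\alpha$.

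For the mean curvature under $\na\dx r=0$, I would observe that the right-hand side of the formula is tensorial and symmetric in $X,Y$ and hence extends to a symmetric bilinear $\bar\alpha$ on all of $TTM$; completing any orthonormal basis of $TS_rM$ by $U^G$ then gives $H^\alpha=\mathrm{tr}_G\bar\alpha-\bar\alpha(U^G,U^G)$. Against the orthonormal frame $\{e_i/\sqrt{f_1},\theta e_i/\sqrt{f_2}\}$ the $\na_X\grad r$ term vanishes identically, $\sum_i e_i(r)^2=\tau^2$, $\theta e_i(r)=0$, and $A(e_i,e_i)=A(\theta e_i,\theta e_i)=A(U^G,U^G)=0$: the horizontal diagonal vanishes because $\langle\calR(e_i,e_j),e_i\rangle$ pairs vertical against horizontal, the vertical diagonal because $\calR$ only sees horizontal arguments, and the $U^G$ term because $\langle R(\grad r,\cdot)u,u\rangle=0$.

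Putting this together, $\mathrm{tr}_G\bar\alpha$ reduces to $b(\delta\tau^2-m)$, while $\bar\alpha(U^G,U^G)=bf_2(a^2\tau^4-b^2r^2)$ simplifies via $a^2=\delta^2 b^2 r^2$ and $b^2r^2f_2(1+\delta\tau^2)=1$ to $b(\delta\tau^2-1)$; the difference collapses to $-(m-1)b=-n/(r\sqrt{f_2+\delta f_2\tau^2})$. The only real obstacle is bookkeeping — tracking the $X(Y(r))$ cancellation in the first part and keeping the horizontal/vertical decompositions straight in the second — but no genuine geometric difficulty arises, everything being a consequence of the already-established formulas for $\na^G$, $A$, and $\calR$.
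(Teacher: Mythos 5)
Your derivation of the formula for $\alpha$ follows the paper's proof exactly: expand $G(\na^G_XY,U^G)$ against $a\grad r$ and $b\xi$, apply the Leibniz rule and the tangency condition $\langle Y,\xi\rangle=rY(r)$, discard the $\calR$-term by skew-symmetry, and watch the non-tensorial $X(Y(r))$ remainders cancel because $af_1+bf_2r=0$. That part is correct and identical in method.

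For the mean curvature you take a genuinely different (and arguably cleaner) route. The paper constructs an explicit adapted orthonormal frame of $T_uS_rM$, namely the verticals $\theta e_i/\sqrt{f_2}$, $i\le n$, together with $m$ mixed vectors $X_i=\sum_p a_{ip}e_p+x_i\xi/r$, and sums $\alpha$ over it; since the $X_i$ mix horizontal and vertical components, the paper must invoke $\na\grad r=0$ to get $A(X_i,X_i)(r)=0$ through $R(\cdot,\cdot)\grad r=0$. You instead extend the tensorial expression to a symmetric form $\bar\alpha$ on all of $TTM$ and compute $H^\alpha=\mathrm{tr}_G\bar\alpha-\bar\alpha(U^G,U^G)$ against the standard frame $\{e_i/\sqrt{f_1},\theta e_i/\sqrt{f_2}\}$. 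This avoids constructing the matrix $a_{ip}$ entirely, and it localises the use of the hypothesis: on your diagonal the $A$-terms die for type reasons alone ($A$ only couples a horizontal to a vertical argument, and $\langle R(\grad r,\cdot)u,u\rangle=0$ handles the cross term in $U^G$), so $\na\dx r=0$ is needed only to remove the Hessian term $\langle Y,\na_X\grad r\rangle$. Your arithmetic checks out: $\mathrm{tr}_G\bar\alpha=b(\delta\tau^2-m)$, $\bar\alpha(U^G,U^G)=bf_2(a^2\tau^4-b^2r^2)=b(\delta\tau^2-1)$ using $a=-\delta br$ and $b^2r^2f_2(1+\delta\tau^2)=1$, and the difference is $-(m-1)b=-nb$, in agreement with the paper. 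The only cosmetic slip is writing $\langle\calR(e_i,e_j),e_i\rangle$ where the relevant pairing is $\langle\calR(e_i,\grad r),e_i\rangle$; the vertical-against-horizontal argument you give is the right one.
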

\begin{proof}
Continuing from (\ref{alpha}),
\begin{eqnarray*}
 \lefteqn{\alpha(X,Y)\ =}\\ &=& f_1\langle\na_XY^h+A(X,Y),a\grad r\rangle+f_2\langle\na_XY^v-\frac{1}{2}\calR(X,Y),b\xi\rangle\\
&=& af_1\langle \na_XY^h+A(X,Y),\grad r\rangle+bf_2\langle\na_XY^v,\xi\rangle\\
&=& af_1(X(Y(r))-\langle Y,\na_X\grad r\rangle+af_1A_{X,Y}(r)+bf_2(X(rY(r))-\langle Y^v,\na_X\xi\rangle)\\
&=& (af_1+bf_2r)X(Y(r))+af_1(A_{X,Y}(r)-\langle Y,\na_X\grad r\rangle)+bf_2(X(r)Y(r)-\langle Y,X^v\rangle)
\end{eqnarray*}
and the result follows. For the mean curvature we take a horizontal $g$-orthonormal frame $e_1,\ldots,e_m$ with $e_m=u/r$. Then the $Y_i=\frac{1}{\sqrt{f_2}}\theta e_i$ for $i=1,\ldots,n$ constitute a vertical frame tangent to $S_rM$. There must also exist an extension of these vectors to an o.n. frame of $T_uS_rM$, and therefore a $m\times m$-matrix $a_{ip}\in\R$ inducing $m$ vectors $X_i=\sum_pa_{ip}e_p+x_i\frac{\xi}{r}$, tangent and o.n. to each other and to the $Y_j$; in particular with $x_i=X_i(r)\in\R$. Now the condition $\na\grad r=0$ implies $A(X,Y)(r)=0$ for all $X,Y$ because in the definition we find the symmetrization of 
\begin{eqnarray*}
\langle\calR(X,\grad r),Y\rangle=-\langle R(u,\theta^tY)\grad r,X^h\rangle=0.
\end{eqnarray*}
Finally,
\begin{eqnarray*}
H^\alpha &=& \sum_{i=1}^m\alpha(X_i,X_i)+\sum_{j=1}^n\alpha(Y_j,Y_j) \\ 
&=& \sum bf_2(X_i(r))^2-bf_2x_i^2-\sum_jb \ =\ -nb
\end{eqnarray*}
\end{proof}
So one has the formulas to compute the Riemannian curvature $\tilde{R}$ of $S_rM$.

From now on we assume $r$ is a constant. Then
\begin{equation}\label{formulade_abalpha}
 b=\frac{1}{r\sqrt{f_2}},\qquad a=-\frac{\sqrt{f_2}}{f_1}\qquad\mbox{and}\qquad
\alpha(X,Y)=-\frac{\sqrt{f_2}}{r}\langle X^v,Y^v\rangle.
\end{equation}
Henceforth, by Gauss formula (\ref{gaussequation}), the curvature ${\tilde{R}}^G(X,Y,Z,W)$ does not differ from that one, given previously for the ambient manifold, except if all four vectors are vertical. Minor adaptations must follow in the Ricci and scalar curvatures, respectively  ${\tilde{\ric}}^G$ and ${\tilde{S}}^G$, of the tangent sphere bundle.
\begin{prop}
With $\ric^G$ and $S^G$ restricted to $S_rM$, we have
\begin{meuenumerate}\label{curvaRiccieescalardeS_rM}
 \item ${\tilde{\ric}}^G=\ric^G+\frac{n-1}{r^2}g_{|_{V\otimes V}}$.
 \item ${\tilde{S}}^G=S^G+\frac{(n-1)n}{f_2r^2}$
\end{meuenumerate}
\end{prop}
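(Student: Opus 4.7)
The plan is to apply the Gauss equation (\ref{gaussequation}) together with the explicit second fundamental form $\alpha(X,Y)=-\frac{\sqrt{f_2}}{r}\langle X^v,Y^v\rangle$ from (\ref{formulade_abalpha}), then contract over a $G$-orthonormal basis of $T_uS_rM$. Pick such a basis $\{E_1,\ldots,E_{2n+1}\}$ and complete it to a $G$-orthonormal basis of $T_uTM$ by adjoining the unit normal $U^G=\xi/(r\sqrt{f_2})$. Contracting the Gauss equation with $\sum_k \tilde{R}^G(X,E_k,E_k,Y)$ decomposes $\tilde{\ric}^G(X,Y)$ into three pieces:
\begin{equation*}
\tilde{\ric}^G(X,Y)\ =\ \bigl(\ric^G(X,Y)-R^G(X,U^G,U^G,Y)\bigr)-\sum_k\alpha(X,E_k)\alpha(E_k,Y)+H^\alpha\alpha(X,Y).
\end{equation*}

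The main technical step is showing the unwanted term $R^G(X,U^G,U^G,Y)$ vanishes. Since $U^G$ is proportional to $\xi$, it suffices to show $R^G(\,\cdot\,,\xi,\xi,\,\cdot\,)=0$. Decomposing both entries into horizontal and vertical parts and using the formulas (\ref{curvaturaRGhhh})--(\ref{curvaturaRGvvh}), every surviving term involves either $\langle\calR(\cdot,\cdot),\xi\rangle$ or $(\na_\xi\calR)(\cdot,\cdot)=\calR(\cdot,\cdot)$ paired against $\xi$; both vanish because $\langle R(\cdot,\cdot)u,u\rangle=0$ by skew-adjointness of the curvature operator, and because $\xi^h=0$ kills $\calR(\xi,\cdot)$. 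The clause $R^G(X^v,Y^v)Z^v=0$ handles the purely vertical piece directly.

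It remains to evaluate the $\alpha$-contributions. Since $T_uS_rM$ splits $G$-orthogonally into the full horizontal $H_u$ and the hyperplane $V_u\cap\xi^\perp$, we may choose the basis so the vertical vectors $E_j^v$ for $j=1,\ldots,n$ form a $G$-o.n. basis of $\xi^\perp\subset V_u$. Because any $X,Y\in T_uS_rM$ satisfy $X^v,Y^v\perp\xi$, expanding in this basis gives $\sum_k\alpha(X,E_k)\alpha(E_k,Y)=\frac{1}{r^2}\langle X^v,Y^v\rangle$, while $H^\alpha=\sum_k\alpha(E_k,E_k)=-\frac{n}{r\sqrt{f_2}}$ as already computed. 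Substituting,
\begin{equation*}
\tilde{\ric}^G(X,Y)=\ric^G(X,Y)-\frac{\langle X^v,Y^v\rangle}{r^2}+\frac{n}{r^2}\langle X^v,Y^v\rangle=\ric^G(X,Y)+\frac{n-1}{r^2}g_{|_{V\otimes V}}(X,Y),
\end{equation*}
which is part 1. For part 2, trace once more over the same $G$-o.n. basis of $T_uS_rM$: the ambient Ricci trace gives $S^G-\ric^G(U^G,U^G)$, and formula (\ref{Ricciverver}) evaluated at $X^v=Y^v=\xi$ vanishes by the same skew-symmetry argument as above, leaving $S^G$. The correction term contributes $\frac{n-1}{r^2}\sum_j\langle E_j^v,E_j^v\rangle=\frac{n-1}{r^2}\cdot\frac{n}{f_2}$, yielding $\tilde{S}^G=S^G+\frac{(n-1)n}{f_2r^2}$, as claimed.
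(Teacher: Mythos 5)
Your proposal is correct and follows essentially the same route as the paper: the Gauss equation with the explicit second fundamental form $\alpha(X,Y)=-\frac{\sqrt{f_2}}{r}\langle X^v,Y^v\rangle$, plus the observation that the normal direction $\xi$ contributes nothing to the ambient Ricci trace because $\langle\calR(\cdot,\cdot),\xi\rangle=\langle R(\cdot,\cdot)u,u\rangle=0$. You are somewhat more systematic than the paper in isolating and killing the term $R^G(X,U^G,U^G,Y)$ across all horizontal/vertical components, but the substance is identical.
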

\begin{proof}
The fibres are $n$-dimensional spheres. The differences ${\tilde{\ric}}^G-\ric^G$ and ${\tilde{S}}^G-S^G$ are easy to check from (\ref{formulade_abalpha}) and the Gauss equation. More closely
\begin{eqnarray*}
 {\tilde{\ric}}^G(X,Y) &=& \ric^G(X,Y)+
\frac{1}{f_2}\sum_{i=1}^n{\tilde{R}}^G(X^v,\theta e_i,\theta e_i,Y^v)\\
&=& \ric^G(X,Y)+\frac{1}{f_2}\sum\bigl(-\alpha(X,\theta e_i)\alpha(\theta e_i,Y)+\alpha(\theta e_i,\theta e_i)\alpha(X,Y)\bigr)\\
&=& \ric^G(X,Y)+\frac{n}{r^2}\langle X^v,Y^v\rangle-\frac{1}{r^2}\langle X^v,Y^v\rangle.
\end{eqnarray*}
Looking at formula (\ref{Riccihorhor}), we see the sum in $i$ of the $R^G(X,\theta e_i,\theta e_i,Y)$ up to $m=n+1$ gives the same as the sum up to $n$. This is because we may take an orthonormal basis of $V$ at each point $u$ such that $u/r$ is the last vector and then we notice $\langle\calR(X^h,e_j),\xi\rangle=0$. Recall $u\perp T_uS_rM$ and $\xi_u=u$. The same question is not put in formulas (\ref{Ricciverver},\ref{Riccihorver}). The same observations are made for ${\tilde{S}}^G$.
\end{proof}
\begin{teo}\label{enfimcurvaturaescalarpositiva}
Let the radius $r$ be a fixed constant. We have the following:
\begin{meuenumerate}
\item For a surface $M$ the bundles $TM$ and $S_rM$ have the {\em same} Ricci and scalar curvatures.
\item Let $m\geq3$ and suppose $M$ has bounded sectional curvatures (e.g. if it
is compact). Then:
\begin{enumerate}
\item for any $f_2$ there exists a sufficiently large $f_1$ such
that the tangent sphere bundle $(S_rM,g^{f_1,f_2})$ has positive scalar
curvature.
\item for any $f_1$ there exists a sufficiently small $f_2$ such
that the tangent sphere bundle $(S_rM,g^{f_1,f_2})$ has positive scalar
curvature.
\end{enumerate}
\end{meuenumerate}
\end{teo}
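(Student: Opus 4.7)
The plan is to read off $\tilde{S}^G$ explicitly and then do an elementary asymptotic analysis in $f_1,f_2$. Combining the scalar curvature formula (\ref{curvaturaescalarTM}) on $(TM,G)$ with the sphere bundle correction of Proposition \ref{curvaRiccieescalardeS_rM}, one obtains on $S_rM$
\[
\tilde{S}^G \ =\ \frac{S}{f_1}-\frac{f_2}{4f_1^2}\sum_{i,j,k=1}^{m}\calR_{ijk}^2+\frac{(n-1)n}{f_2r^2},
\]
where $\calR_{ijk}=\langle R(e_i,e_j)u,e_k\rangle$ is evaluated at a point $u\in S_rM$, in particular with $\|u\|=r$. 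This single identity should supply both parts.

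For part 1, $m=2$ means $n=1$, so the factor $(n-1)$ in the correction term $\frac{n-1}{r^2}g_{|V\otimes V}$ of Proposition \ref{curvaRiccieescalardeS_rM} vanishes, and likewise the final term in the displayed $\tilde{S}^G$ vanishes. Hence $\tilde{\ric}^G=\ric^G$ and $\tilde{S}^G=S^G$ identically on $S_rM$.

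For part 2 I would first observe that a global bound on the sectional curvatures of $M$ yields a uniform bound $|S|\leq C_0$ and, since $\|u\|=r$ is constant on $S_rM$, a uniform bound $\sum_{i,j,k}\calR_{ijk}^2\leq C_1$, with constants depending only on $\dim M$, the curvature bound and $r$. In case (a) fix $f_2$ and let $f_1\to\infty$: the first two summands of $\tilde{S}^G$ are dominated by $C_0/f_1$ and $f_2C_1/(4f_1^2)$ respectively, both tending to $0$ uniformly, while the third summand $\frac{(n-1)n}{f_2r^2}$ is a strictly positive constant (using $n\geq2$, i.e.\ $m\geq3$). Hence for $f_1$ sufficiently large, $\tilde{S}^G>0$ everywhere on $S_rM$. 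In case (b) fix $f_1$ and let $f_2\to0^+$: the first summand is bounded by $C_0/f_1$, the second tends to $0$ like $f_2$, and the third blows up to $+\infty$, so again $\tilde{S}^G>0$ uniformly for $f_2$ small enough.

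There is essentially no hard step: the only point requiring care is that the bounds on $S$ and on $\sum\calR_{ijk}^2$ be uniform across $S_rM$, which is exactly what a global sectional curvature bound on $M$ together with the constancy of $r$ provides. The hypothesis $m\geq 3$ is used in an essential way, since otherwise the positive term $\frac{(n-1)n}{f_2r^2}$ is absent, as part 1 makes explicit.
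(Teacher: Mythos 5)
Your argument is correct and follows essentially the same route as the paper: combine the scalar curvature formula (\ref{curvaturaescalarTM}) for $(TM,G)$ with the correction term of Proposition \ref{curvaRiccieescalardeS_rM}, note that bounded sectional curvature (via polarization) gives uniform bounds on $S$ and on $\sum\calR_{ijk}^2$ over $S_rM$, and let the positive term $\frac{(n-1)n}{f_2r^2}$ dominate as $f_1\to\infty$ or $f_2\to0^+$. The paper's own proof is just a two-line version of exactly this; your write-up merely makes the asymptotics explicit.
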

\begin{proof}
It is clear by a polarization process that all values $\calR_{ijk}$ in formula
(\ref{curvaturaescalarTM}) remain bounded on $S_rM$. The result follows
combining with Proposition \ref{curvaRiccieescalardeS_rM}.
\end{proof}
In the present setting, we immediately generalise Theorems 1 and 2 in \cite{KowSek2}.
\begin{teo}[\cite{KowSek2}]\label{generalizacaodeKowSek}
Let $\dim M\geq3$ and suppose $M$ has bounded sectional curvatures (e.g. if it is compact). Then the tangent sphere bundle $(S_rM,g^{f_1,f_2})$ has positive scalar curvature for all sufficiently small constant radius $r>0$.
\end{teo}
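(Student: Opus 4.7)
The plan is to combine the scalar curvature formulas already established, namely equation (\ref{curvaturaescalarTM}) for $S^G$ on $TM$ and Proposition \ref{curvaRiccieescalardeS_rM}(2) for the adjustment to $S_rM$, and then exploit how the terms scale as $r\to 0$. Concretely, at any $u\in S_rM$ with $\pi(u)=x$, I would write
\begin{equation*}
 \tilde S^G(u)\ =\ \frac{S(x)}{f_1}\ -\ \frac{f_2}{4f_1^2}\sum_{i,j,k=1}^m(\calR_{ijk})^2\ +\ \frac{n(n-1)}{f_2\,r^2},
\end{equation*}
where $\calR_{ijk}=\langle R(e_i,e_j)u,e_k\rangle$ in a local $g$-orthonormal frame.

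Next I would control the first two summands uniformly in $u\in S_rM$ using the bounded sectional curvature hypothesis. Boundedness of sectional curvature on $M$ yields a uniform pointwise bound on the full curvature tensor $R$ (via its sectional coefficients), hence a uniform bound $|S(x)|\leq S_0$ on the scalar curvature. It also gives $|R(e_i,e_j)u|\leq c\|u\|=cr$ for a constant $c$ independent of the point and the frame, so $|\calR_{ijk}|\leq cr$, and therefore $\sum_{i,j,k}(\calR_{ijk})^2\leq K\,r^2$ for a constant $K=K(m,c)$. Compactness of $M$ is just a sufficient condition that guarantees such uniform bounds exist.

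Putting the three pieces together,
\begin{equation*}
 \tilde S^G(u)\ \geq\ -\frac{S_0}{f_1}\ -\ \frac{f_2 K\,r^2}{4f_1^2}\ +\ \frac{n(n-1)}{f_2\,r^2},
\end{equation*}
and because $m\geq 3$ forces $n(n-1)>0$, the last term dominates as $r\to 0^+$: the first term is a fixed constant in $r$, the middle term tends to $0$, while the positive term blows up like $r^{-2}$. Hence there exists $r_0>0$ such that $\tilde S^G>0$ on $S_rM$ for every $0<r<r_0$.

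There is no real obstacle to overcome: the only point requiring a small argument is the uniform estimate $|\calR_{ijk}|\leq cr$, which is where the bounded sectional curvature hypothesis (or compactness) is actually used. The dimension restriction $m\geq 3$ cannot be dropped since it is precisely what keeps the fibrewise curvature contribution $\tfrac{n(n-1)}{f_2 r^2}$ strictly positive; for a surface one has $n=1$ and this positive term disappears, consistent with item 1 of Theorem \ref{enfimcurvaturaescalarpositiva}.
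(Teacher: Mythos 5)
Your argument is correct and is essentially the paper's own: the paper derives this theorem by combining the scalar curvature formula (\ref{curvaturaescalarTM}) with Proposition \ref{curvaRiccieescalardeS_rM}, noting that bounded sectional curvature bounds the full curvature tensor by polarization so that the terms $\calR_{ijk}=\langle R(e_i,e_j)u,e_k\rangle$ are controlled on $S_rM$, while the fibre term $n(n-1)/(f_2r^2)$ dominates as $r\to0$. Your sharper observation that $|\calR_{ijk}|\leq cr$ (so the middle term actually vanishes as $r\to0$) is a slight refinement but changes nothing essential.
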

We just remark that \cite[Theorem 2]{KowSek2} essentially gives conditions for achieving \textit{negative} scalar curvature. We may state analogous result for the weighted metric.

\vspace{1.5cm}




\begin{thebibliography}{10}


\bibitem{Alb3}
R. Albuquerque,
{\em Weighted metrics on tangent sphere bundles},
accepted in Quart. J. Math. 2011, 15 p. ( http://dx.doi.org/10.1093/qmath/haq051 )



\bibitem{Besse}
A. L. Besse,
\newblock {\em Einstein Manifolds},
\newblock Springer-Verlag Berlin Heidelberg 1987.

\bibitem{Blair}
D. E. Blair,
{\em Riemannian Geometry of Contact and Symplectic Manifolds},
Progress in Math. 203, 2nd ed., Birkh\"auser Boston 2010.


\bibitem{BoeCal}
E. Boeckx and G. Calvaruso,
{\em When is the unit tangent sphere bundle semi-symmetric?},
T\^ohoku Math. J., 56 (2004), 357--366.



\bibitem{Kow1}
O. Kowalski,
{\em Curvature of the induced Riemannian metric of the tangent bundle of a Riemannian manifold},
J. Reine Angew. Math. 250 (1971), 124--129.

\bibitem{KowSek2}
O. Kowalski and M. Sekizawa,
{\em On Tangent Sphere Bundles with Small or Large Constant Radius},
Ann. Global Anal. Geom. 18 (2000), 207--219.



\bibitem{Munteanu}
M. I. Munteanu,
{\em Some Aspects on the Geometry of the Tangent Bundles and Tangent Sphere Bundles of a Riemannian Manifold},
Medit. J. of Math. 5 (2008), 43--59.



\bibitem{Sakai}
T. Sakai,
{\em Riemannian Geometry},
Transl. Math. Monographs 149, AMS 1996.




\bibitem{Seki}
M. Sekizawa,
{\em Curvatures of tangent bundles with Cheeger-Gromoll metric}, 
Tokyo J. Math. 14 (1991), 407--417.


\bibitem{Wu}
H. Wu,
{\em An elementary method in the study of nonnegative curvature},
Acta Math., 142(1-2) (1979), 57--78.


\end{thebibliography}
\end{document}